 \newtheorem{thrm}{Theorem}[]
 \newtheorem*{thrm*}{Theorem}
 \newtheorem{lemma}[thrm]{Lemma}
 \newtheorem*{lemma*}{Lemma}
 \newtheorem*{Vermutung*}{Vermutung}
 \newtheorem{prop}[thrm]{Proposition}
 \theoremstyle{definition}
\newtheorem*{defi*}{Definition}
  \theoremstyle{remark}
  \newtheorem{remark}[thrm]{Remark}
  \newtheorem*{remark*} {Remark}
\newcommand{\dopp} [1] {\mathbbmss{#1}}
\newcommand{\m}{\mathbb}
\newcommand{\f}{\begin{align}}
\newcommand{\fo}{\begin{align*}}
\newcommand{\fe}{\end{align}}
\newcommand{\foe}{\end{align*}}
\newcommand{\map}[3] {#1:\, #2\, \longrightarrow \, #3}
\renewcommand{\P}{\m{P}}
\newcommand{\R}{\mathbb{R}}
\newcommand{\E}{\mathbb{E}}
\newcommand{\U}{\mathcal{U}}
\theoremstyle{plain}
\theoremstyle{definition}
\theoremstyle{remark}
\renewcommand{\a}{\alpha}
\renewcommand{\b}{\beta}
\newcommand{\e}{\eta}
\renewcommand{\l}{\lambda}
\renewcommand{\o}{\^{o}}
\renewcommand{\hat}{\widehat}
\renewcommand{\phi}{\varphi}
\renewcommand{\e}{\epsilon}
\renewcommand{\epsilon}{\varepsilon}
\newcommand{\Ph}{P_{N,Q,\b}^h}
\newcommand{\F}{\widehat}
\renewcommand{\U}{\mathcal{U}}
\newcommand{\cir}[1]{\overset{\circ}{#1}}
\newcommand{\lv}{\left\lvert}
\newcommand{\rv}{\right\rvert}
\newcommand{\Lip}[1]{\left\lvert #1 \right\rvert_{\mathcal{L}}}
\newcommand{\lb}{\left(}
\newcommand{\rb}{\right)}
\renewcommand{\o}{\omega}
\newcommand{\w}{\omega}
\newcommand{\lvb}{\big\lvert}
\newcommand{\rvb}{\big\rvert}
\newcommand{\lbb}{\big(}
\newcommand{\rbb}{\big)}
\newcommand{\lee}{\big\{}
\newcommand{\ree}{\big\}}
\begin{document}

\title[]{Particle Systems with Repulsion Exponent $\b$ \\and Random Matrices}
\author[Martin Venker]{\small{Martin Venker}}
\address{\small{Department of Mathematics, Bielefeld University,\\P.O.Box 100131, 33501 Bielefeld, Germany}}
\email{mvenker@math.uni-bielefeld.de}
\thanks{Research supported by the CRC 701 ``Spectral Structures and Topological Methods in Mathematics''.}

\keywords{Universality, $\b$-Ensembles, Random Matrices, Repulsion}

\begin{abstract}
We consider a class of particle systems generalizing the $\b$-Ensembles from random matrix theory. In these new ensembles, particles
experience repulsion of power $\b>0$ when getting close, which is the same as in the $\b$-Ensembles. For
distances larger than zero, the interaction is allowed to differ from those present for random
eigenvalues. We show that the local bulk correlations of the $\b$-Ensembles, universal in random matrix theory, also appear in
these new ensembles. 
\end{abstract}

 \maketitle

\section{Introduction and Main Results}

A central theme in random matrix theory is the universality phenomenon, which means that many essentially
different matrix distributions lead in the limit of growing dimension to the same spectral statistics.

In the past 15 years or so, much progress has been made in proving universality of local spectral distributions, especially
correlations between neighboring eigenvalues in the bulk of the spectrum and of the largest eigenvalues. It is known that
there is a parameter, usually denoted $\b$, which determines the universality class of the ensemble. To explain this in more detail,
define for any $\b>0$ and  a
continuous
function $\map{Q}{\R}{\R}$ the invariant $\b$-Ensemble
$P_{N,Q,\b}$ on $\R^N$ which is given by
\begin{align}
 P_{N,Q,\b}(x):=\frac{1}{Z_{N,Q,\b}}\prod_{i<j}\lv x_i-x_j\rv^\b e^{- N\sum_{j=1}^NQ(x_j)}.\label{beta3}
\end{align}
(With a slight abuse of notation, we will not distinguish between a measure and its density.) Here we assume $Q(t)\geq \b'\log\lv
t\rv$ for $\lv t\rv$ large enough for some $\b'\geq \b$
with $\b'>1$.

For $\b=1,2,4$, $P_{N,Q,\b}$ is the eigenvalue distribution of a probability ensemble on the space of
real symmetric ($\b=1$), complex Hermitian ($\b=2$) or quaternionic self-dual ($\b=4$) $(N\times N)$ matrices,
respectively. The matrix distributions are invariant under orthogonal, unitary or symplectic conjugations,
respectively, explaining the name ``invariant ensembles''. For arbitrary $\b$, only for quadratic $Q$, $P_{N,Q,\b}$ is known to be
an eigenvalue distribution.

It has been shown (see \cite{GoetzeVenker} for references) that the local spectral statistics in the bulk or at the edges of the
spectrum do in many cases not
depend on $Q$ or, in other terms, invariant ensembles with different potentials $Q$ but the same $\b$ have the same
local statistics. It is also known that different values of $\b$ lead to different limiting (local) distributions. This is
not surprising as the interaction term $\prod_{i<j}\lv x_i-x_j\rv^\b$ has a strong effect on neighboring eigenvalues whereas
$e^{- N\sum_{j=1}^NQ(x_j)}$ just confines all eigenvalues independently into a compact interval. In the limit $N\to\infty$, these
two competing forces balance and produce a limiting measure of compact support.

In \cite{GoetzeVenker} the question was addressed whether the interaction term $\prod_{i<j}\lv x_i-x_j\rv^\b$ could be changed
without changing the local statistics. To this end, we introduced ensembles with density proportional to

\begin{align}
\prod_{i<j}\phi(x_i-x_j)e^{- N\sum_{j=1}^NQ(x_j)},\label{beta4}
\end{align} 
where $Q$  is a continuous function of sufficient growth at infinity compared to the continuous function
$\map{\phi}{\R}{[0,\infty)}$. The interaction potential $\phi$ fulfills 
\begin{align*}
\phi(0)=0,\quad \phi(t)>0\ \text{for }t\not=0\quad \text{and }\ \lim_{t\to0}\frac{\phi(t)}{\lv t\rv^\b}=c>0\quad \text{ for some
}\b>0,
\end{align*}
or, in other terms, $0$ is the only zero of $\phi$ and it is of order $\b$. It has been conjectured in \cite{GoetzeVenker} that the
bulk
correlations for the ensembles \eqref{beta4} are the same as in the case $\phi(t)=\lv t\rv^\b$, i.e. the same as for the invariant
ensembles in random matrix theory. This was proved in \cite{GoetzeVenker} for $\b=2$ and a special class of functions $\phi$
and $Q$. In the present work, we prove a similar result for arbitrary $\b>0$. This shows that the local bulk correlations (at
least in the considered cases) merely depend on the \textit{repulsion exponent} $\b$ and not on the interaction of particles at
distances larger than $0$.

We believe that these results may lead to an explanation for the occurrence of random matrix bulk statistics in a number of seemingly
unrelated observations in real world and science (see \cite{GoetzeVenker} for references). Spacings between cars in different
situations were found to be fitted well by the universal spacing
statistics from random matrix theory ($\b=1$ for parking along one-way streets, $\b=2$ along two-way streets, $\b=4$ for waiting
in front of traffic signals). Also spacings between perching birds and between bus arrival times at stops in certain cities seem to
obey ($\b=2$) random matrix spacing statistics. Gaps between zeros of the Riemann zeta function on the critical line are another
famous
example from mathematics (also $\b=2$).
In
all these
observations,
a strong repulsion between consecutive quantities is present. 

Furthermore, the ensemble \eqref{beta4} does not seem to have a natural spectral
interpretation which makes our findings a first step in proving universality of random matrix bulk distributions for more
general
particle
systems.

To state our main results, we first rewrite the ensemble \eqref{beta4}. Let $h$ be a continuous even function which is bounded
below. Let $Q$ be
a continuous even function of sufficient growth at infinity. By $\Ph$ we will denote the probability density on $\R^N$ defined
by
 \begin{align} 
  \Ph(x):=\frac{1}{Z_{N,Q,\b}^h}\prod_{i<j}\lv x_i-x_j\rv^\b\exp\{- N\sum_{j=1}^NQ(x_j)-\sum_{i<j}h(x_i-x_j)\} , \label{211}
 \end{align}
where $Z_{N,Q,\b}^h$ denotes the normalizing constant. The density $\Ph$ can also be written in the form \eqref{beta4} with
$\phi(t):=\lv t\rv^\b\exp\{-h(t)\}$.

Furthermore, let for a probability density $P_N$ on $\R^N$ and $k=1,2,\dots$,
\begin{align*}
 \rho_{N}(t_1,\dots,t_k):=\int_{\R^{N-k}}P_N(t_1,\dots,t_k,x_{k+1},\dots,x_N)\,dx_{k+1}\dots dx_N
\end{align*}
denote the $k-$th correlation function of $P_N$. The correlation functions are the marginal densities. The measure
$\rho_{N}(t_1,\dots,t_k)\,dt_1\dots dt_k$ is called $k-$th correlation measure. Denote by $\rho_{N,Q,\b}^{h,k}$ the $k$-th
correlation function of $\Ph$ and by $\rho_{N,Q,\b}^k$ the $k$-th correlation function of $P_{N,Q,\b}$ from \eqref{beta3}.
Universality of ensembles is usually defined by universality of their correlation functions or measures as many interesting
statistics of the ensembles can be expressed in terms of correlation functions.
Finally, introduce for a twice differentiable convex function $Q$ the quantity $\a_Q:=\inf_{t\in\R}Q''(t)$.

The following theorem deals with the global or macroscopic behavior of the ensemble $\Ph$.

\begin{thrm}\label{global}
Let $h$ be a real analytic and even Schwartz function. Then there
exists a constant $\a^h\geq 0$ such that for all real analytic, strongly convex and even $Q$ with $\a_Q> \a^h$, the following
holds:\\
The first correlation measure $\rho_{N,Q,\b}^{h,1}$ converges weakly to a compactly supported probability measure
$\mu_{Q,\b}^h$ which has a non-zero and continuous density on the interior of
its
support. Weak convergence means that for any bounded and
continuous $\map{f}{\R}{\R}$, we have
\begin{align*}
	\lim_{N\to \infty}\int f(t)\, \rho_{N,Q,\b}^{h,1}(t)\, dt=\int f(t)\, \mu_{Q,\b}^h(t)dt.
\end{align*}
\end{thrm}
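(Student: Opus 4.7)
The plan is to run the standard large deviation / energy minimization strategy for log-gas ensembles, now with the extra pair interaction $h$ included in the energy functional. Writing $L_N := \frac{1}{N}\sum_{j=1}^N \delta_{x_j}$ for the empirical measure and taking $-\frac{1}{N^2}\log$ of \eqref{211}, I first express the density in the form
\begin{align*}
\Ph(x) = \frac{1}{Z_{N,Q,\b}^h}\exp\!\lb -N^2\, I_h(L_N) + O(N\log N)\rb,
\end{align*}
where the error accounts for the removed diagonal and for the absorption of the $\frac{1}{N}\sum_j Q(x_j)$ term, and
\begin{align*}
I_h(\mu) := \int Q(t)\,d\mu(t) \;-\; \frac{\b}{2}\iint \log\lv t-s\rv\, d\mu(t)\,d\mu(s) \;+\; \frac{1}{2}\iint h(t-s)\, d\mu(t)\,d\mu(s)
\end{align*}
is an energy functional on the space of Borel probability measures on $\R$. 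Growth of $Q$ and boundedness below of $h$ ensure $I_h$ is well-defined on the set of measures of finite logarithmic energy and has compact sublevel sets in the weak topology.

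The next step is to prove existence and uniqueness of a minimizer $\mu_{Q,\b}^h$ of $I_h$. Existence is standard from lower semicontinuity and tightness. For uniqueness, I decompose $I_h$ as the sum of the classical (strictly convex) log-gas functional plus the quadratic form $\mu \mapsto \frac{1}{2}\iint h(t-s)\,d\mu(t)\,d\mu(s)$. Since $h$ is an even real analytic Schwartz function, its Fourier transform $\widehat h$ is a bounded real-valued function, so the quadratic form equals $\frac{1}{2}\int \widehat h(\xi)\lv \widehat\mu(\xi)\rv^2\,d\xi$ and is bounded below by $-\frac{1}{2}\|\widehat h_-\|_\infty$ times the $L^2$-norm of $\widehat\mu$. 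A comparable quadratic lower bound can be absorbed into the strongly convex piece $\int Q\,d\mu$ provided $\a_Q$ exceeds a threshold $\a^h$ depending only on $h$; this is precisely the role of the constant $\a^h$ in the statement. For such $Q$, the functional $I_h$ becomes strictly convex on $\M$, giving a unique minimizer.

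To see that $\mu_{Q,\b}^h$ has compact support and a continuous density on the interior of its support, I write down the Euler--Lagrange conditions: there exists a constant $\ell$ with
\begin{align*}
\b\int \log\lv t-s\rv\, d\mu_{Q,\b}^h(s) - \int h(t-s)\, d\mu_{Q,\b}^h(s) - Q(t) \;\leq\; \ell,
\end{align*}
with equality $\mu_{Q,\b}^h$-almost everywhere. The growth of $Q$ at infinity forces the support to be compact. Differentiating the equality in $t$ on the interior of the support yields a singular integral equation of Tricomi type for the density, with the regular perturbation $h' * \mu_{Q,\b}^h$ on the right-hand side. Real analyticity of $Q$ and $h$, together with strong convexity, allow the inversion of the Hilbert-transform operator on each support interval and give a non-zero continuous (in fact real analytic) density on the interior of the support, after verifying, via a degree/monotonicity argument using strong convexity, that the support is a single interval or a union of finitely many intervals.

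Finally, to pass from the variational problem to the statement about $\rho_{N,Q,\b}^{h,1}$, I combine the representation above with a standard concentration estimate: for any bounded Lipschitz $f$, the random variable $\int f\, dL_N$ concentrates around $\int f\, d\mu_{Q,\b}^h$, which gives weak convergence of $L_N$ to $\mu_{Q,\b}^h$ in probability. Since $\rho_{N,Q,\b}^{h,1}$ is the expected empirical density, i.e.\ $\int f(t)\,\rho_{N,Q,\b}^{h,1}(t)\,dt = \E\!\lr\int f\,dL_N\rr$, weak convergence of the expectation follows by bounded convergence. The main obstacle is the second step: identifying an explicit and usable threshold $\a^h$ so that $I_h$ is strictly convex and its minimizer has the stated regularity, since the interaction term spoils the nice positivity properties of the pure logarithmic energy and must be controlled by the strongly convex external field.
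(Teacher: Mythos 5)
The decisive gap is in your uniqueness step. The functional $I_h$ is \emph{affine} in $\mu$ through the term $\int Q\,d\mu$: along a segment $\mu_\theta=(1-\theta)\mu_0+\theta\mu_1$ the second derivative of $\theta\mapsto I_h(\mu_\theta)$ is
\begin{align*}
\b\iint\log\lvert t-s\rvert^{-1}\,d\nu(t)\,d\nu(s)\;+\;\iint h(t-s)\,d\nu(t)\,d\nu(s),\qquad \nu:=\mu_1-\mu_0,
\end{align*}
and $Q$ does not appear at all. Hence no choice of $\a_Q$, however large, can ``absorb'' the negative part of the quadratic form generated by $h$; convexity of $I_h$ on probability measures is governed solely by the pair kernel $\frac{\b}{2}\log\lvert t-s\rvert^{-1}+\frac12 h(t-s)$, which in Fourier variables (the log kernel acts as $c/\lvert\xi\rvert$ on mass-zero signed measures) essentially requires $\widehat{h}(\xi)\geq -c\b/\lvert\xi\rvert$ --- a condition on $h$ and $\b$ alone that fails for perfectly admissible even Schwartz $h$ with $\widehat h\not\geq0$. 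So the threshold $\a^h$ is not produced by the argument you sketch, and without uniqueness of the minimizer your large-deviation scheme only yields accumulation of $L_N$ on the set of minimizers, not the asserted convergence to a specific $\mu_{Q,\b}^h$. (This is consistent with the paper's remark that an explicit $\a^h$ is available precisely when $h$ is positive semi-definite.) Two smaller points: $I_h(L_N)=+\infty$ for atomic measures, so the $O(N\log N)$ rewriting needs the usual regularization of LDP proofs; and concentration of $\int f\,dL_N$ under $\Ph$ is not ``standard'' but would itself have to come from an LDP upper bound or a comparison argument --- again contingent on uniqueness.

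This is exactly where the paper takes a different route and never convexifies the functional. It constructs a measure $\mu$ solving the fixed-point equation ``$\mu$ is the equilibrium measure of $V=Q+h_\mu$'' (possible once $h''\geq-\a_Q$), rewrites $\Ph$ via the Hoeffding decomposition \eqref{51} as the convex $\b$-ensemble $P_{N,V,\b}$ reweighted by $e^{\U}$, shows that $\E_{N,V,\b}e^{\l\U}$ stays bounded in $N$ (Proposition \ref{Fourier}, via the Fourier identity \eqref{lemma4} and Lemma \ref{Concentration2}), and then transfers the exponential concentration of linear statistics from $P_{N,V,\b}$ to $\Ph$ by H\"older's inequality. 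Uniqueness of $\mu$ is obtained a posteriori from uniqueness of weak limits, and existence, positivity and continuity of the density follow from the regularity theory for equilibrium measures of real-analytic strictly convex external fields. To salvage a purely variational proof you would have to genuinely establish uniqueness of minimizers (or critical points) of $I_h$ for $\a_Q$ large --- for instance by confining all candidate minimizers to a small interval and controlling the low-frequency negative part of $\widehat h$ there --- and that nontrivial work is what is hidden behind your phrase ``can be absorbed.''
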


\begin{remark*}\noindent
\begin{itemize}
\item In general, $\mu_{Q,\b}^h$ depends on $h$, i.e. changing the interaction term has an influence on the (limiting) global
density of the particles.
\item If $h$ is positive semi-definite, then $\a^h$ in Theorem \ref{global} may be explicitly chosen as
$\a^h=\sup_{t\in\R}-h''(t)$.
\item For $k=2, 3, \dots,$ the $k-$th correlation measure converges weakly to the $k$-fold product
$\lbb\mu_{Q,\b}^h\rbb^{\otimes
k}$. This has been shown in \cite{GoetzeVenker} for $\b=2$ but the same proof goes through for arbitrary $\b>0$. However, as a
byproduct of the local universality result, the proof in \cite{GoetzeVenker} uses some rather technical and complicated arguments
which we have no further use for in this article. We will therefore give a short proof of Theorem \ref{global} which only uses
methods needed anyway.
\item Note that the dependence of $\mu_{Q,\b}^h$ on $\b$ can be eliminated if the prefactor $\b$ is put in
front of $Q$ and $h$.
\item In \cite{BPS}, ensembles with many-body interactions are considered, replacing $h$ in \eqref{211}. Here global asymptotics
 but not local correlations are discussed. In the case of pair interactions, the classes of admissible interactions in \cite{BPS}
and
in this paper are different. In \cite{BPS}, a convexity condition is posed, depending solely on the additional interaction
potential where our conditions depend on both $Q$ and $h$. The characterisation of the limiting measure is different, too.

In \cite{Chafaietal}, a large deviations principle has been shown for interacting particle systems of the type \eqref{beta4} in
$\R^d, d\geq1$ and without specification of the repulsion behavior.
\end{itemize}
\end{remark*}

The next theorem states the local universality in the bulk. We use the notion of universality by Bourgade, Erd\H{o}s, Schlein, Yau,
Yin et al. (see e.g.
\cite{ErdosYau12} and the references therein). Let $G$ be the Gaussian potential $G(t):=x^2$ and recall that the corresponding
limiting measure $\mu_{Q,\b}$
is the semicircle distribution (with a certain variance depending on $\b$). Recall that under mild assumptions on $Q$, there is a
measure $\mu_{Q,\b}$ of compact support which is the weak limit of the first correlation measure of $P_{N,Q,\b}$. Consider the
scaled correlation functions 
\begin{align}
\frac{1}{\mu_{Q,\b}^h(a)^k}\rho_{N,Q,\b}^{h,k}\lb
a+\frac{t_1}{N\mu_{Q,\b}^h(a)},\dots,a+\frac{t_k}{N\mu_{Q,\b}^h(a)}\rb,\label{beta6}
\end{align}
where $a$ is a point with $\mu_{Q,\b}^h(a)>0$ and $t_1,\dots,t_k$ are contained in an $N$-independent compact interval. Under this
scaling, the local density around $a$ will be asymptotically one, in particular independent of $a$. For $N\to\infty$, $h=0$ and
$Q=G$,
the limit of
\eqref{beta6} exists and has been described in terms of a stochastic process in \cite{ValkoVirag}. As for general $\b$ no
nice formula for this limit is known, we state the following theorem as universality result, comparing the local correlations of
$\Ph$ with
those of the Gaussian $\b$-Ensemble $P_{N,G,\b}$.

\begin{thrm}\label{local}
Let $h$ and $Q$ satisfy the assumptions of Theorem \ref{global}. Let $0<\xi\leq 1/2$ and set $s_N:=N^{-1+\xi}$. Then for
$k=1,2,\dots$, we have for any $a$ in the interior of the support of $\mu_{Q,\b}^h$, any $a'$ in the interior of the support of the
semicircle law $\mu_{G,\b}$ and any smooth function $\map{f}{\R^k}{\R}$ with compact support
\begin{align*}
\lim_{N\to\infty} \int &f(t_1,\dots,t_k)\Bigg[\int_{a-s_N}^{a+s_N}\frac{1}{\mu_{Q,\b}^h(a)^k}\rho_{N,Q,\b}^{h,k}\lb
u+\frac{t_1}{N\mu_{Q,\b}^h(a)},\dots,u+\frac{t_k}{N\mu_{Q,\b}^h(a)}\rb\,\frac{du}{2s_N}\\
-&\int_{a'-s_N}^{a'+s_N}\frac{1}{\mu_{G,\b}(a')^k}\rho_{N,G,\b}^{k}\lb
u+\frac{t_1}{N\mu_{G,\b}(a')},\dots,u+\frac{t_k}{N\mu_{G,\b}(a')}\rb\,\frac{du}{2s_N}\Bigg]dt_1\dots dt_k\\
&=0.
\end{align*}
\end{thrm}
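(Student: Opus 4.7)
The plan is to reduce the statement to the known bulk universality of invariant $\b$-ensembles by absorbing the additional interaction $h$ into an effective mean-field potential. Set $L_N:=\frac{1}{N}\sum_i\d_{x_i}$, $\nu_N:=L_N-\mu_{Q,\b}^h$, $V^h(x):=\int h(x-y)\,d\mu_{Q,\b}^h(y)$ and $\tilde Q:=Q+V^h$. Since $h$ is even, expanding $\iint h(x-y)\,dL_N(x)\,dL_N(y)$ around $\mu_{Q,\b}^h$ yields the algebraic identity
\[
\sum_{i<j} h(x_i-x_j) \;=\; N\sum_{j=1}^N V^h(x_j) \;+\; \frac{N^2}{2}\iint h(x-y)\,d\nu_N(x)\,d\nu_N(y) \;+\; c_N,
\]
with $c_N$ configuration-independent. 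This rewrites
\[
\Ph(x) \;\propto\; P_{N,\tilde Q,\b}(x)\,\exp\!\Big(-\tfrac{N^2}{2}\iint h(x-y)\,d\nu_N(x)\,d\nu_N(y)\Big).
\]
A direct comparison of Euler--Lagrange equations shows that the limit measure $\mu_{Q,\b}^h$ from Theorem \ref{global} is also the $\b$-equilibrium measure for the invariant potential $\tilde Q$: both minimisation problems produce the condition $\tilde Q(x)-\b\int\log|x-y|\,d\mu_{Q,\b}^h(y)=\textup{const}$ on $\supp(\mu_{Q,\b}^h)$. Thus $\Ph$ and $P_{N,\tilde Q,\b}$ share the same macroscopic density and the same classical locations $\g_i$, and the task reduces to showing that the perturbation factor does not alter the averaged local correlations in \eqref{beta6}.

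For this I would first establish near-optimal rigidity, $|x_i-\g_i|\le N^{-1+\epsilon}$ with overwhelming probability, for the invariant ensemble $P_{N,\tilde Q,\b}$; this falls within the scope of the real-analytic, strongly convex $\b$-ensemble machinery (loop equations in the spirit of Bourgade--Erd\H{o}s--Yau and Shcherbina), since $\tilde Q$ inherits real-analyticity from $Q$ and $h$, and strong convexity from $\a_Q>\a^h$ via the bound $(V^h)''\ge -\|h''\|_\infty$. Under such rigidity, smoothness of $h$ converts weak-topology control of $\nu_N$ into a quantitative bound on the quadratic form $\iint h\,d\nu_N\,d\nu_N$, from which rigidity can be transferred to $\Ph$ by a Radon--Nikodym comparison: the bounded perturbation in the exponent cannot displace overwhelming-probability sets. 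Combined with the averaging over the window $s_N=N^{-1+\xi}$, which smooths out microscale variations of the density ratio $d\Ph/dP_{N,\tilde Q,\b}$, this yields equality of the averaged local correlations of $\Ph$ and $P_{N,\tilde Q,\b}$ in the limit. A final invocation of bulk universality for invariant $\b$-ensembles, which in exactly the averaged form of \eqref{beta6} compares $P_{N,\tilde Q,\b}$ with the Gaussian reference $P_{N,G,\b}$, completes the proof.

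The main obstacle I expect is the rigidity bootstrap. The large-deviation concentration behind Theorem \ref{global} is too coarse to control the quadratic error $\tfrac{N^2}{2}\iint h\,d\nu_N\,d\nu_N$ directly. One has to bootstrap: weak rigidity renders the perturbation mild enough to compare $\Ph$ with $P_{N,\tilde Q,\b}$; that comparison then transfers sharper rigidity from the invariant ensemble back to $\Ph$; iterating reaches the optimal scale $N^{-1+\epsilon}$. That the error is \emph{quadratic} in $\nu_N$ is essential (a linear perturbation would be of order $N$ and destroy any comparison), and smoothness of $h$ is used at every step to convert weak-topology control of $\nu_N$ into honest bounds on the interaction energy.
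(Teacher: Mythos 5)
Your first step---the Hoeffding-type splitting of $\sum_{i<j}h(x_i-x_j)$ into $N\sum_j h_\mu(x_j)$ plus a term quadratic in $\nu_N$, and the observation that $\mu_{Q,\b}^h$ is the equilibrium measure of $\tilde Q=Q+h_\mu$---is exactly the paper's reduction in Section 2; your quadratic error is $-\U(x)$ from \eqref{51}. The genuine gap lies in how you then compare $\Ph$ with $P_{N,\tilde Q,\b}$. Rigidity for both ensembles does not imply that their local (even window-averaged) correlations agree: rigidity pins particles to classical locations at scale $N^{-1+\epsilon}$, but says nothing about the joint statistics of gaps at scale $1/N$, which is precisely what \eqref{beta6} measures. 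Moreover, the density ratio $q\propto e^{\U}$ is not $1+o(1)$: under $P_{N,\tilde Q,\b}$ the functional $\U$ is a nondegenerate random variable of order one, so $\Ph$ is a genuinely different measure at every scale, and the claim that averaging over the window of width $s_N$ in $u$ ``smooths out microscale variations of the density ratio'' has no supporting mechanism---$\U$ is a global functional of the whole configuration, not a quantity oscillating in $u$ that could average out.

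What is needed, and what the paper supplies, is a quantitative comparison of the local statistics of $q\,d\w$ and $d\w$. This is Proposition \ref{erdosyau} (the Bourgade--Erd\H{o}s--Yau local relaxation flow estimate), which bounds the difference of averaged gap statistics by $C\sqrt{N^{\e_1}D_\w(\sqrt q)\tau/\lv J\rv}$ plus an entropy term; the whole proof therefore hinges on the bound $D_{N,V,\b}(\sqrt q)=O(1)$ of Proposition \ref{Dirichlet}. That bound is not a consequence of rigidity: it is proved from the Fourier representation \eqref{lemma4} of $\U$, which expresses $\partial_{x_l}\U$ through centred linear eigenvalue statistics, combined with the concentration inequality for such statistics under the convex invariant ensemble (Lemma \ref{Concentration2}, Proposition \ref{Fourier}). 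Your outline contains no analogue of this Dirichlet-form (or local-scale entropy) estimate, and without it the passage from ``same global law plus rigidity'' to ``same averaged bulk correlations'' is unjustified; the rigidity bootstrap you describe would at best strengthen Theorem \ref{global}, not yield Theorem \ref{local}. Your final step, invoking invariant-ensemble bulk universality to replace $P_{N,\tilde Q,\b}$ by the Gaussian ensemble, is fine and coincides with the paper's use of \cite[Corollary 2.2]{Bourgadeetal1}.
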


\begin{remark*}\noindent
\begin{itemize}
\item If the inner integrations were not present, the convergence in Theorem \ref{local} would be vague convergence of the scaled
correlation measures. Here an
additional small (uniform) average around the points $a$ and $a'$ is performed.
\item If $h$ is positive semi-definite, then $\a^h$ in Theorem \ref{local} may be explicitly chosen as
$\a^h=\sup_{t\in\R}-h''(t)$.
\item The choice of the Gaussian $\b$-Ensemble $P_{N,G,\b}$ is just for definiteness, in fact any other ensemble belonging to the
same universality class could be chosen. So far, these are known to be basically all $P_{N,Q,\b}$ with the same $\b$ and real
analytic $Q$ which leads to a limiting measure $\mu_{Q,\b}$ of connected support \cite{Bourgadeetal}.
\end{itemize}
\end{remark*}

These results should be compared to those of \cite{GoetzeVenker}. There we could show for $\b=2$ under the same conditions
on $Q$ and $h$ a much stronger type of convergence as in Theorem \ref{local}. We proved in \cite{GoetzeVenker}

\begin{align*}
 &\lim_{N\to\infty}\frac{1}{\mu_{Q,\b=2}^h(a)^k}\rho_{N,Q,\b=2}^{h,k}\lb
a+\frac{t_1}{N\mu_{Q,\b=2}^h(a)},\dots,a+\frac{t_k}{N\mu_{Q,\b=2}^h(a)}\rb\\
&=\det
\left[\frac{\sin\left(\pi(t_i-t_j)\right)}{\pi(t_i-t_j)}\right]_{1\leq i,j\leq k}
\end{align*}
uniformly in $t_1,\dots,t_k$ from any compact subset of $\R^k$ and uniformly in the point $a$ from any
compact proper subset of the support of $\mu_{Q,\b=2}^h$. This locally uniform convergence of the marginal densities was inherited
from
strong results on universality of unitary invariant (i.e. $\b=2$) ensembles (cf. \cite{LevinLubinsky08}). In order to apply these
results, we developed a method to express the correlation functions of the model $P_{N,Q,\b=2}^h$ as a
probabilistic mixture of unitary invariant ensembles with potential $V+f/N$, where $V$ was fixed and $f$ was random. However, this
representation was only possible for negative semi-definite $h$ and an argument involving complex analysis had to be used to extend
the universality for more general $h$.

So far, the local relaxation flow approach due to Erd\H{o}s,
Schlein and Yau (refined by others) \cite{ErdosSchleinYau} and applied to $\b$-Ensembles by Bourgade, Erd\H{o}s and Yau
\cite{Bourgadeetal1, Bourgadeetal} is the only method for
showing bulk universality for general $\b$-Ensembles. A remark on some crucial points of this method is
included in Section 4. Their approach actually addresses universality of gap distributions which implies the weaker form of
universality of the correlation measures as stated in Theorem \ref{local}. As we use their method, we obtain the same form of
convergence. If other sufficiently general universality results on $\b$-Ensembles yielding
stronger types of convergence were available, the method of \cite{GoetzeVenker} could be used to prove Theorem \ref{local} with
stronger forms of convergence. One advantage of the local relaxation flow approach is the possibility to compare local statistics of
eigenvalue ensembles and other, not necessary spectral ensembles, directly. This allows us to give a short proof of Theorem
\ref{local}.

Theorems \ref{global} and \ref{local} rely on comparison with a $\b$-Ensemble which has the same global asymptotics. This ensemble
is constructed in Section 2.
Section 3 contains the proof of Theorem \ref{local} via the local relaxation flow approach.
In Section 4, we give a short proof of Theorem \ref{global}.

\section{The associated invariant ensemble}
The main idea for the analysis of $P_{N,Q,\b}^h$ is to find a $\b$-Ensemble having the same global asymptotics.
In this short section we review the determination of the limiting measure for our particle system from \cite{GoetzeVenker} and use
this to construct an ensemble of eigenvalues with
the same global and local behaviour.

Let $\b>0$, $h$ be a continuous even function, $Q$ a strictly
convex
even
function and assume that
 \begin{align} 
 P_{N,Q,\b}^h(x):=\frac{1}{Z_{N,Q,\b}^h}\prod_{1\leq i<j\leq N}\lv x_i-x_j\rv^\b e^{-N\sum_{j=1}^NQ(x_j)-\sum_{i<j}h(x_i-x_j)},
\label{modelh}
 \end{align}
 defines the density of a probability measure on $\R^N$, where
 \begin{align*}
Z_{N,Q,\b}^h:=\int_{\R^N} \prod_{1\leq i<j\leq N}\lv x_i-x_j\rv^\b e^{-N\sum_{j=1}^NQ(x_j)-\sum_{i<j}h(x_i-x_j)}dx
 \end{align*}
denotes the normalizing constant.
We will use the notation
\begin{align}
 f_\mu(s):=\int f(t-s)d\mu(t),\ \  f_{\mu\mu}:=\int\int f(t-s)d\mu(t)d\nu(s)\label{beta7}
\end{align}
for a probability measure $\mu$ and an even function $\map{f}{\R}{\R}$ of sufficient integrability.

Using notation \eqref{beta7}, we make the Hoeffding type decomposition
\begin{align}
&\sum_{i<j}h(x_i-x_j)\nonumber\\
&=-\frac{N^2}{2}h_{\mu\mu}-\frac{N}{2}h(0) +N\sum_{j=1}^N
h_\mu(x_j)+\frac{1}{2}\lbb\sum_{i,j=1}^Nh(x_i-x_j)-\left[h_\mu(x_i)+h_\mu(x_j)-h_{\mu\mu}\right]\rbb\nonumber\\
&=-\frac{N^2}{2}h_{\mu\mu}-\frac{N}{2}h(0)+N\sum_{j=1}^N h_\mu(x_j)-\U(x),\quad \quad \text{ where}\nonumber\\
 &\U(x):=-\frac{1}{2}\lbb\sum_{i,j=1}^Nh(x_i-x_j)-\left[h_\mu(x_i)+h_\mu(x_j)-h_{\mu\mu}\right]\rbb.\label{51}
\end{align}
Now we can rewrite $\Ph$ as 
\begin{align}
\Ph(x)=\frac{1}{Z_{N,V_\mu,\b,\,\U}}\prod_{1\leq i<j\leq N}\lv x_i-x_j\rv^\b e^{-N\sum_{j=1}^NV_\mu(x_j)+\U(x)},\label{rewritten}
\end{align}
where we defined the external
field 
\begin{align*}
V_\mu(t):=Q(t)+h_\mu(t) 
\end{align*}
and absorbed the constant $\exp\{-(N^2/2)h_{\mu\mu}-(N/2)h(0)\}$ into the new normalizing constant
$Z_{N,V_\mu,\b,\,\U}$. 

Recall that the unique minimizer of the functional 
\begin{align*}
 I_{V,\b}(\mu):=\int V(t)d\mu(t)+\frac{\b}{2}\int\int \log\lv s-t\rv^{-1}d\mu(s)d\mu(t)
\end{align*}
is called equilibrium measure to the external field $V$ (and $\b>0$).  In \cite{GoetzeVenker} it has been shown that, provided $Q$
and $h$ are twice differentiable, $h$ is bounded and 
$h''\geq-\a_Q$, there is a measure $\mu$ such that $\mu$ is the equilibrium measure
to $V_\mu$. The uniqueness of such a $\mu$ follows (for $\a_Q$ large enough) from the convergence of $\rho_{N,Q,\b}^{h,1}$ towards
$\mu$. This measure is denoted $\mu_{Q,\b}^h$ in Theorem \ref{global}, but for brevity we will simply write $\mu$ instead of
$\mu_{Q,\b}^h$ and skip the indices $\mu$ in \eqref{rewritten}.

We note in passing that the external field $V=Q+h_\mu$ is convex (due to $h''\geq-\a_Q$), even and real-analytic.

 We will often use representation  \eqref{rewritten}. The proofs of Theorems
\ref{global}
and \ref{local} rely on
comparison of $\Ph$ with the $\b$-Ensemble
\begin{align}
P_{N,V,\b}(x)=\frac{1}{Z_{N,V,\b}}\prod_{1\leq i<j\leq N}\lv x_i-x_j\rv^\b e^{-N\sum_{j=1}^NV(x_j)}. \label{eq2}
\end{align}

\section{Proof of Theorem \ref{local}}

In this section we use the local relaxation flow approach developed by Erd\H{o}s, Yau, Schlein et. al. to establish universality of
the
local bulk correlations. First we introduce some notation from \cite{Bourgadeetal1}.

Let $k$ be fixed. Let $\map{G}{\R^k}{\R}$ be a smooth function with compact support and $m=(m_1,\dots,m_k)$ with $m_j$ being
positive integers. Define
\begin{align*}
 G_{i,m}(x):=G(N(x_i-x_{i+m_1}),\dots,N(x_{i+m_{k-1}}-x_{i+m_k})).
\end{align*}
The Dirichlet form of a smooth test function $\map{f}{\R^N}{\R}$ w.r.t. a probability measure $d\o$ on $\R^N$ is defined as
\begin{align*}
 &D_{\w}(f):=\frac{1}{2N}\sum_{j=1}^N \int \lbb\partial_{x_j}f\rbb^2d\o.
\end{align*}
Let $f$ be a probability density function w.r.t. $d\o$. The (relative) entropy of $f$ w.r.t. $d\o$ is defined as
\begin{align*}
&S_{\w}(f):=\int f\log fd\o.
\end{align*}

We will use the following general theorem.

\begin{prop}\cite[Lemma 5.9]{Bourgadeetal1}\label{erdosyau}
Let $\map{G}{\R^k}{\R}$ be bounded and of compact support. Let $d\o$ be a probability measure on
$\{x\,:\,x_1<x_2<\dots<x_N\}\subset\R^N$ given
by
\begin{align*}
	d\o=\frac{1}{Z} e^{-\b N\hat{\mathcal{H}}(x)}dx,\quad \hat{\mathcal{H}}(x)=\mathcal{H}_0(x)-\frac{1}{N}\sum_{i<j}\log\lv x_j-x_i\rv
\end{align*}
with the property that $\nabla^2\mathcal{H}_0\geq \tau^{-1}$ holds for some positive constant $\tau$. Let $qd\o$ be another
probability measure with smooth density $q$. Let $J\subset \{1,2,\dots,N-m_k-1\}$ be a set of indices. Then for any $\e_1>0$ we have
\begin{align*}
\lv \frac{1}{\lv J\rv}\sum_{i\in J}\int G_{i,m}\,qd\o-\frac{1}{\lv J\rv}\sum_{i\in J}\int G_{i,m}d\o\rv\leq
C\sqrt{N^{\e_1}\frac{D_{\o}(\sqrt{q})\tau}{\lv J\rv}}+C\sqrt{S_{\o}(q)}e^{-cN^{\e_1}}.
\end{align*}
\end{prop}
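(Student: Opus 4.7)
The plan is to follow the entropy/Dirichlet form strategy of Erd\H{o}s-Schlein-Yau that underlies the cited \cite[Lemma 5.9]{Bourgadeetal1}: introduce a reversible diffusion with invariant measure $\o$, then split the difference of the two expectations via a time parameter $t$ into a short-time piece controlled by the Dirichlet form $D_\o(\sqrt{q})$ and a long-time piece controlled by the relative entropy $S_\o(q)$.

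First I would set up the dynamics. Let $L=(2N)^{-1}\Delta-(\b/2)\nabla\hat{\mathcal{H}}\cdot\nabla$, symmetric in $L^2(\o)$ with associated Dirichlet form $D_\o$. Since $-N^{-1}\log\lv x_j-x_i\rv$ has positive semi-definite Hessian on the Weyl chamber $\{x_1<\dots<x_N\}$, the hypothesis $\nabla^2\mathcal{H}_0\geq\t^{-1}$ transfers to $\nabla^2\hat{\mathcal{H}}\geq\t^{-1}$, so Bakry-\'Emery produces a logarithmic Sobolev inequality $S_\o(q)\leq C\t D_\o(\sqrt{q})$. Combined with the entropy dissipation identity $\partial_s S_\o(q_s)=-4D_\o(\sqrt{q_s})$ along $\partial_s q_s=Lq_s$, this gives the decay $S_\o(q_t)\leq e^{-ct/\t}S_\o(q)$.

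For $t>0$ to be chosen I would split
\fo
\frac{1}{\lv J\rv}\sum_{i\in J}\int G_{i,m}(q-1)d\o=\frac{1}{\lv J\rv}\sum_i\int G_{i,m}(q-q_t)d\o+\frac{1}{\lv J\rv}\sum_i\int G_{i,m}(q_t-1)d\o.
\foe
The second (long-time) piece is at most $\|G\|_\infty\|q_t-1\|_{L^1(\o)}\leq C\sqrt{S_\o(q)}e^{-ct/\t}$ by Pinsker, and the choice $t$ of order $\t N^{\e_1}$ produces the $e^{-cN^{\e_1}}$ factor. For the first (short-time) piece, setting $F_J:=\lv J\rv^{-1}\sum_{i\in J}G_{i,m}$, the identity
\fo
\int F_J(q-q_t)d\o=-\int_0^t\int(LF_J)q_s\,d\o\,ds=\frac{1}{2N}\int_0^t\int\nabla F_J\cdot\nabla q_s\,d\o\,ds
\foe
together with $\nabla q_s=2\sqrt{q_s}\nabla\sqrt{q_s}$ and Cauchy-Schwarz yields a bound of the form $\int_0^t\sqrt{D_\o(\sqrt{q_s})/(2N)}\cdot\|\nabla F_J\|_{L^2(q_s\o)}ds$. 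The crucial cancellation estimate is
\fo
\|\nabla F_J\|_\infty^2\leq\frac{CN^2}{\lv J\rv},
\foe
which holds because each coordinate $x_j$ lies in the support of only $O(k)$ of the summands $G_{i,m}$; this is the $1/\sqrt{\lv J\rv}$ gain characteristic of averages of weakly correlated localized observables. Combined with monotonicity of $s\mapsto D_\o(\sqrt{q_s})$ along the reversible flow (a further Bakry-\'Emery consequence, used to replace $D_\o(\sqrt{q_s})$ by $D_\o(\sqrt{q})$ under the time integral) and the chosen $t$, this produces the first summand.

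I expect the main obstacle to be the simultaneous tuning of $t$ against $\t$, $N$ and $\lv J\rv$ so that both error terms match the precise powers in the statement. Everything hinges on the sharp cancellation bound $\|\nabla F_J\|_\infty^2\leq CN^2/\lv J\rv$, since this is the sole place where the averaging over $J$ enters the argument; any weaker cancellation would destroy the $1/\sqrt{\lv J\rv}$ factor in the main term.
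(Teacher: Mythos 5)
First, note that the paper itself does not prove this proposition; it is imported from \cite{Bourgadeetal1} (Lemma 5.9), and the surrounding remark only sketches the mechanism. Judged as a proof of the stated bound, your proposal has a genuine gap at exactly the step you single out as the crux. Feeding $\lvert\nabla F_J\rvert_\infty^2\le CN^2/\lvert J\rvert$ into your Cauchy--Schwarz gives, per time slice, $\frac{1}{2N}\bigl\lvert\int\nabla F_J\cdot\nabla q_s\,d\o\bigr\rvert\le C\sqrt{N D_\o(\sqrt{q_s})/\lvert J\rvert}$ (note the factor $N$, not $1$, under the square root), so after integrating over $[0,t]$ and using your monotonicity claim the short-time piece is of order $t\sqrt{N D_\o(\sqrt q)/\lvert J\rvert}$. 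Since the Pinsker/entropy-decay piece forces $t\asymp\tau N^{\e_1}$ to produce $e^{-cN^{\e_1}}$, your main term is $C\tau N^{\e_1}\sqrt{N D_\o(\sqrt q)/\lvert J\rvert}$, which exceeds the claimed $C\sqrt{N^{\e_1}\tau D_\o(\sqrt q)/\lvert J\rvert}$ by a factor of order $\sqrt{\tau N^{1+\e_1}}$. This is not a matter of tuning $t$: in the intended application ($\tau\sim1$, $\lvert J\rvert\sim N$, $D_\o(\sqrt q)=O(1)$) your bound is of size $N^{\e_1}$ (and even with $t$ only logarithmically large it never vanishes), whereas the lemma must give $N^{(\e_1-1)/2}\to0$. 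So the naive cancellation $\lvert\nabla F_J\rvert^2\le CN^2/\lvert J\rvert$ is not the engine of the lemma.

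The two missing ingredients are precisely what you discard when you observe only that the Hessian of the logarithmic term is positive semi-definite and transfer $\nabla^2\mathcal{H}_0\ge\tau^{-1}$ to $\hat{\mathcal{H}}$. (i) Because $G$ has compact support, $\nabla G(N(x_a-x_b),\dots)$ vanishes unless $\lvert x_a-x_b\rvert\le C/N$, and $\nabla F_J$ acts only in difference directions $v_a-v_b$; this allows one to insert the weight $(x_a-x_b)^2\le C/N^2$ into the Cauchy--Schwarz rather than using $\lvert\nabla F_J\rvert_\infty$. (ii) The singular part of the convexity bound, $\langle v,\nabla^2\hat{\mathcal{H}}(x)v\rangle\ge\tau^{-1}\Vert v\Vert^2+\frac{1}{N}\sum_{i<j}\frac{(v_i-v_j)^2}{(x_i-x_j)^2}$ --- the ``crucial observation'' in the paper's remark --- enters the Bakry--\'Emery computation along the flow and shows that the \emph{time-integrated} difference-direction Dirichlet form, $\int_0^t\Lambda(s)\,ds$ with $\Lambda(s):=\frac{1}{N^2}\sum_{i<j}\int\frac{\bigl((\partial_i-\partial_j)\sqrt{q_s}\bigr)^2}{(x_i-x_j)^2}\,d\o$ (up to the appropriate normalization), is bounded by $CD_\o(\sqrt q)$ uniformly in $t$, not merely by $tD_\o(\sqrt q)$ as monotonicity would give. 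Pairing (i) against (ii) by Cauchy--Schwarz in space-time yields a short-time term $C\sqrt{t\,D_\o(\sqrt q)/\lvert J\rvert}$, i.e.\ $\sqrt t$ instead of your $t\sqrt N$, which with $t\asymp\tau N^{\e_1}$ is exactly the stated first summand. A further, more minor, point: for $\b\in(0,1)$ the very existence of the dynamics you run is unclear and is handled in the literature by regularizing the logarithmic interaction, an issue your sketch does not address.
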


In our application we will choose $d\o=P_{N,V,\b}$ and $q=(Z_{N,V,\b}/Z_{N,V,\b,\U})\exp\{\U\}$. Formally, we should replace $V$ by
$V/\b$. However, for notational convenience we will omit this trivial scaling. If $\a_Q$ is large enough, then
$V$ is strongly convex, hence $\tau=1/\a_V$. By the symmetry of $\Ph$ and $P_{N,V,\b}$, it is equivalent to restrict the measure to
the simplex
$\{x\,:\,x_1<x_2<\dots<x_N\}$ and multiply by $N!$\,.
From \cite[Theorem 2.3]{ErdosYau12} we have that 

\begin{align*}
 S_{\o}(q)\leq {C}D_{\o}(\sqrt{q}).
\end{align*}
It is thus sufficient to prove that $D_{\o}(\sqrt{q})$ is bounded in $N$ as $J$ will be chosen such that $\lv J\rv\sim N$, in order
to identify the bulk correlations.

\begin{remark}[On the local relaxation flow approach]
 To briefly explain the essence of this method due to Erd\H{o}s, Schlein, Yau and others (see e.g. \cite{ErdosYau12} for
references and a complete review), let us consider two measures as in Proposition \ref{erdosyau}, $d\w$ and $qd\w$ and their
statistics $\int gd\w$ and $\int g\,qd\w$ for some test function $g$. Assume that one can define a Markov process on $\R^N$ in
terms of the Dirichlet form $D_\w$ (or the formal generator $L_N:=\frac{1}{2N}\Delta-\frac{1}{2}(\nabla
\hat{\mathcal{H}})\nabla$), having $d\o$ as stationary distribution. Assume that the process has the initial
distribution $qd\o$ and denote the evolution of the density w.r.t. $d\w$ by $(f_t)_{t\geq 0}, f_0=q, f_\infty=1$.
Then one can write 
\begin{align*}
 \int g\,qd\w-\int gd\w=\lbb\int g\,qd\w-\int g\,f_td\w\rbb+\lbb \int g\,f_td\w-\int gd\w \rbb,	
\end{align*}
which corresponds to running the process up to time $t$. If the process is ergodic and the time $t$ is large enough, $\int
g\,f_td\w$ will be close to the
equilibrium $\int g\,d\w$. If this $t$ is still ``small'', i.e. the convergence to the stationary distribution is fast, then the
distance between $\int g\,qd\w$ and $\int g\,f_td\w$ should be not too big. These distances are measured in terms of Dirichlet form
and
entropy of $d\w$. These estimates are due to the Bakry-Emery method
which yields ergodicity or \textit{relaxation} making use of the strict convexity of the Hamiltonian, i.e. of the bound
$\nabla^2\mathcal{H}_0\geq \tau^{-1}$. It turns out that the constant $\tau$ is the time scale for the relaxation to equilibrium,
meaning that e.g. $S_\w(f_t)\leq e^{-t/\tau}S_\w(f_0)$. Here we tacitly used that the logarithmic part of the Hamiltonian
$\hat{\mathcal{H}}$ is convex, therefore does not increase the relaxation time. However, one crucial observation is
that from the trivial bound
\begin{align*}
 \langle v,\nabla^2 \hat{\mathcal{H}}(x)v\rangle\geq \frac{1}{\tau}\|v\|^2+\frac{1}{N}\sum_{i<j}\frac{(v_i-v_j)^2}{(x_i-x_j)^2}
\end{align*}
one can infer that the relaxation is much faster in the directions $(v_i-v_j)$ provided that $x_i$ and $x_j$ are close. Indeed, the
mean distance between neighboring eigenvalues is of order $1/N$, hence the convexity bound for the Hamiltonian should be locally of
order $N$, therefore yielding a time to the local equilibrium of order $1/N$ whereas the time to the global equilibrium is of order
1. This informal reasoning can be captured by choosing test functions like $G_{i,m}$ which depend only on eigenvalue
differences in the local scaling (i.e. multiplied by $N$) and vanish whenever two eigenvalues are not close to each other. By
exploiting these features of $G_{i,m}$ and some estimates, one arrives at Proposition \ref{erdosyau}. For arbitrary test functions
$g$, one would get basically the same estimate except for the quantity $\lv J\rv\sim N$ which divides $D_\w(\sqrt{q})$. 

One
problem with this idea is that the existence of the process associated to the Dirichlet form is not clear for $\b\in(0,1)$. For
$\b\geq
1$, the repulsion is strong enough to prevent collision between the eigenvalues but for $\b<1$ the probability of explosion is
positive. This problem was overcome in \cite{erdosgraph} by smoothing the singular logarithmic term and using the approach above
for the corresponding process.

In our application, we can effectively estimate the Dirichlet form. This is due to the fact that $d\w$ and $qd\o$ have the same
global limit (cf. Theorem \ref{global}) and we have concentration of $\U$ under
$d\o=P_{N,V,\b}$ (cf. Proposition \ref{Fourier} below). 
\end{remark}
Eventually we will prove the following key proposition.
\begin{prop}\label{Dirichlet}
 Let $D_{N,V,\b}$ denote the Dirichlet form w.r.t. $P_{N,V,\b}$ and \\$q=(Z_{N,V,\b}/Z_{N,V,\b,\U})\exp\{\U\}$. Then there is a
constant $C$ such that we have for
$\a_Q$ large
enough
\begin{align*}
 D_{N,V,\b}(\sqrt{q})\leq C \quad \text{for all }N.
\end{align*}
\end{prop}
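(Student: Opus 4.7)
A direct computation with $q=(Z_{N,V,\b}/Z_{N,V,\b,\U})e^{\U}$ gives $\partial_{x_k}\sqrt{q}=\frac{1}{2}\sqrt{q}\,\partial_{x_k}\U$, hence
\[
D_{N,V,\b}(\sqrt q)\;=\;\frac{1}{8N}\,\mathbb{E}_{\Ph}\bigl[\|\nabla\U\|^2\bigr],
\]
so the proposition reduces to the gradient estimate $\mathbb{E}_{\Ph}[\|\nabla\U\|^2]\leq CN$. I would next pass to Fourier: if $\hat h$ is the Fourier transform of the Schwartz function $h$, then setting $T_\xi(x):=\sum_j e^{i\xi x_j}-N\hat\mu(\xi)$, the Hoeffding-type cancellation \eqref{51} collapses to the identity
\[
\U(x)\;=\;-\frac{1}{4\pi}\int \hat h(\xi)\,|T_\xi(x)|^2\,d\xi,
\]
from which one reads off $\partial_{x_k}\U(x)=-(2\pi)^{-1}\mathrm{Re}\int i\xi\,\hat h(\xi)\,\overline{T_\xi}\,e^{i\xi x_k}\,d\xi$.

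Applying Cauchy--Schwarz in $\xi$ (using $|e^{i\xi x_k}|=1$) and summing the resulting $k$-independent bound over $k=1,\dots,N$ yields the pointwise estimate
\[
\|\nabla\U\|^2\;\leq\;\frac{N}{(2\pi)^2}\,\|\xi\hat h\|_{L^1}\int|\xi\hat h(\xi)|\,|T_\xi|^2\,d\xi,
\]
so since $\xi\hat h$ is Schwartz it is enough to bound $\mathbb{E}_{\Ph}[|T_\xi|^2]$ by a function of $\xi$ of at most polynomial growth, uniformly in $N$.

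For this I would split $\mathbb{E}_{\Ph}[|T_\xi|^2]=\Var_{\Ph}(S_\xi)+|\mathbb{E}_{\Ph}[S_\xi]-N\hat\mu(\xi)|^2$ with $S_\xi:=\sum_j e^{i\xi x_j}$. Because $\a_Q>\a^h$, the Hamiltonian of $\Ph$ is strongly convex with constant at least $N\theta$ for some $\theta>0$ (the logarithmic interaction contributes a positive semidefinite Hessian, and $V=Q+h_\mu$ is strongly convex by the choice of $\a^h$), so the Brascamp--Lieb inequality applied to $\mathrm{Re}\,S_\xi$ and $\mathrm{Im}\,S_\xi$ gives $\Var_{\Ph}(S_\xi)\leq 2\xi^2/\theta$, which integrates fine against $|\xi\hat h(\xi)|$. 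For the bias I would invoke the forthcoming Proposition \ref{Fourier} on the concentration of $\U$ under $P_{N,V,\b}$: together with Jensen's inequality it bounds the exponential moments $\mathbb{E}_{P_{N,V,\b}}[e^{c\U}]$ for small $c>0$, which by H\"older's inequality allows transferring estimates from $P_{N,V,\b}$ to $\Ph=q\,P_{N,V,\b}$, while on $P_{N,V,\b}$ itself $|\mathbb{E}[S_\xi]-N\hat\mu(\xi)|$ is controlled by standard rigidity for $\b$-ensembles with analytic, strongly convex external field.

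The principal obstacle is the bias term: the Fourier/Cauchy--Schwarz reduction and the variance estimate are essentially automatic from log-concavity and the Schwartz decay of $\hat h$, but producing a uniform-in-$N$, polynomial-in-$\xi$ bound on $|\mathbb{E}_{\Ph}[S_\xi]-N\hat\mu(\xi)|^2$ hinges on the delicate transfer from $P_{N,V,\b}$ to $\Ph$ via the exponential concentration of $\U$ that constitutes Proposition \ref{Fourier}, which in turn crucially exploits the fact --- arranged in Section 2 --- that both measures share the same global limit $\mu$.
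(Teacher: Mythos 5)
Your plan is essentially the paper's argument: the identity $D_{N,V,\b}(\sqrt q)=\frac{1}{8N}\E_{\Ph}\|\nabla\U\|^2$, the Fourier representation \eqref{lemma4} of $\U$, the Cauchy--Schwarz/Jensen reduction to moments of the centered trigonometric linear statistics, and the use of Proposition \ref{Fourier} plus H\"older to pass between $\Ph$ and $P_{N,V,\b}$ are all exactly the steps in the text; the paper then bounds \emph{all} moments of $\sum_j\cos(tx_j)-N\int\cos(ts)d\mu(s)$ under $P_{N,V,\b}$ via the concentration Lemma \ref{Concentration2} (your ``standard rigidity''), whose centering at $N\int f\,d\mu$ is legitimate precisely because $\mu$ was constructed in Section 2 as the equilibrium measure of $V=Q+h_\mu$ -- the point you correctly identify as crucial. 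The one structural difference is your variance/bias split, with the variance of $S_\xi$ handled under $\Ph$ itself by Brascamp--Lieb; the paper avoids any estimate under $\Ph$ by decoupling $e^{\U}$ from $(\partial_{x_l}\U)^2$ with H\"older right away, which is slightly more economical since the transferred moment bound already controls $\E_{\Ph}\lv T_\xi\rv^2$ and makes the Brascamp--Lieb step redundant. Two small corrections to your justifications: (i) strong convexity of the Hamiltonian of $\Ph$ does \emph{not} follow from convexity of $V=Q+h_\mu$, because that Hamiltonian contains the genuine pair term $\sum_{i<j}h(x_i-x_j)$ (equivalently $-\U$); the correct argument is the Hessian bound $N\a_Q\|v\|^2+\sum_{i<j}h''(x_i-x_j)(v_i-v_j)^2\geq N(\a_Q-\sup_t(-h''(t)))\|v\|^2$, which is fine here since the proposition assumes $\a_Q$ large enough; (ii) the H\"older transfer from $P_{N,V,\b}$ to $\Ph$ needs exponential moments $\E_{N,V,\b}e^{\l\U}$ for some $\l>1$, not merely for small $c>0$ -- Proposition \ref{Fourier} does supply this for any fixed $\l$ once $\a_Q\geq\a^h(\l)$, so the step goes through, but as phrased it would not.
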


One ingredient to the proof of Proposition \ref{Dirichlet} is the following identity from \cite{GoetzeVenker}, which can be
obtained using Fourier inversion.
We have
 \begin{align}\label{lemma4}
  \U(x)=-\frac{1}{2\sqrt{2\pi}}\int\lv \cir{u}_N(t,x)\rv^2\
\F{h}(t)dt ,\qquad \text{where}
 \end{align}
\begin{align*}
 &\cir{u}_N(t,x):=\sum_{j=1}^N \cos(tx_j)-N\int \cos(ts)d\mu(s)+\sqrt{-1}\sum_{j=1}^N
\sin(tx_j),\ \F{h}(t):=\frac{1}{\sqrt{2\pi}}\int_{\R}e^{-its}h(s)ds.
\end{align*}
 A trivial but useful observation from \eqref{rewritten} and \eqref{eq2} is
\begin{align*}
 &\E_{N,Q,\b}^hf(x)=(Z_{N,V,\b}/{Z_{N,V,\b, \U}})\E_{N,V,\b}f(x)e^{\U(x)}.
\end{align*}

The next proposition establishes concentration of $\U$.
\begin{prop}\label{Fourier} For each $\l>0$ there are $\a^h(\l)>0, C_1(\l),
C_2(\l)$ such that for all $N$
\begin{align*}
  0<C_1(\l)<\E_{N,V,\b}\exp\{\l\U(x)\}\leq C_2(\l)<\infty,\quad \text{ if } \a_Q\geq \a^h(\l).
\end{align*}
In particular, we have for $\a_Q$ large enough for all $N$
\begin{align*}
 0<C_1(1)\leq Z_{N,V,\b, \U}/{Z_{N,V,\b}}\leq C_2(1)<\infty.
\end{align*}

\end{prop}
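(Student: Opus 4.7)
The plan is to combine the Fourier identity \eqref{lemma4} with the strong log-concavity of the reference $\b$-Ensemble $P_{N,V,\b}$ to obtain sub-Gaussian concentration for the centred Fourier modes $\cir u_N(t,x)$, and then to control exponential moments of $\U$ by a Taylor expansion in $\l$.

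For $\a_Q$ large enough, $V=Q+h_\mu$ is strongly convex with $V''\geq\a_V>0$, so the Hamiltonian $\mathcal{H}(x):=N\sum_j V(x_j)-\b\sum_{i<j}\log\lv x_i-x_j\rv$ satisfies $\Hess\mathcal{H}\geq N\a_V\,I$ on the Weyl chamber (the logarithmic part being convex). Hence $P_{N,V,\b}$ is log-concave, and the Bakry--Emery log-Sobolev inequality together with Herbst's argument yields $\E_{N,V,\b}\exp\{\theta(f-\E_{N,V,\b}f)\}\leq\exp\{\theta^2\sup\lv\nabla f\rv^2/(2N\a_V)\}$ for every smooth $f:\R^N\to\R$. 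Applied to $f_t^c(x):=\sum_j\cos(tx_j)$ and $f_t^s(x):=\sum_j\sin(tx_j)$, whose Euclidean gradients have norm at most $\lv t\rv\sqrt N$, this gives sub-Gaussian tails with variance proxy $t^2/\a_V$, uniformly in $N$. Setting $Y(t):=f_t^c-\E_{N,V,\b}f_t^c$, $Z(t):=f_t^s$ (centred because $V$ is even), and $m(t):=\E_{N,V,\b}f_t^c-N\hat\mu(t)$, one has $\lv\cir u_N(t,x)\rv^2=(Y(t)+m(t))^2+Z(t)^2$.

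Next I would argue $\lv m(t)\rv$ is bounded uniformly in $t,N$ using the rigidity of the first correlation measure of a $\b$-Ensemble with real-analytic, strongly convex potential (for instance the $1/N$-expansion $\rho_{N,V,\b}^1/N=\mu+\mu^{(1)}/N+O(1/N^2)$ with $\mu^{(1)}$ a compactly supported signed measure, cf.\ the rigidity estimates in \cite{Bourgadeetal1}), which gives $\lv m(t)\rv\leq\|\mu^{(1)}\|_{\textup{TV}}+O(1/N)$. Combined with the sub-Gaussian estimates, standard moment bounds yield $\E_{N,V,\b}\lv\cir u_N(t,x)\rv^{2k}\leq C^k(kt^2/\a_V+1)^k$ for every $k\geq1$. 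Substituting into the $k$-fold integral expression for $\E_{N,V,\b}\U^k$ coming from \eqref{lemma4} and applying the generalised H\"older inequality produces
\fo
\lv\E_{N,V,\b}\U^k\rv\leq C_1^k\lb\frac{k\,M_1}{\a_V}+M_0\rb^k,\qquad M_j:=\int\lv t\rv^{2j}\,\lv\F h(t)\rv\,dt,
\foe
with $M_0,M_1<\infty$ since $h$ is Schwartz. Expanding $\E_{N,V,\b}\exp\{\l\U\}=\sum_{k\geq0}(\l^k/k!)\,\E_{N,V,\b}\U^k$ and using $k^k/k!\leq e^k$, the series is dominated by $\sum_k(\l\,C_1 e)^k(M_1/\a_V+M_0/k)^k$, which converges as soon as $\a_V>C'\l$ for some $C'=C'(h)$; this gives the upper bound $C_2(\l)$. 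For the lower bound, Jensen yields $\E_{N,V,\b}\exp\{\l\U\}\geq\exp\{\l\,\E_{N,V,\b}\U\}$, and the $k=1$ case of the moment estimate gives $\lv\E_{N,V,\b}\U\rv\leq C(h)$ uniformly in $N$, hence $C_1(\l)\geq\exp\{-\l C(h)\}>0$. The partition-function statement is then immediate from the identity $Z_{N,V,\b,\U}/Z_{N,V,\b}=\E_{N,V,\b}\exp\{\U\}$ specialised to $\l=1$.

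The main obstacle is the uniform control of the bias $m(t)$: what one really needs is rigidity of $\rho_{N,V,\b}^1$ towards $N\mu$ in essentially a total-variation sense that is uniform in the test function $\cos(t\cdot)$. If the $1/N$-expansion is not directly available in the form used above, a fallback is to prove $\|\rho_{N,V,\b}^1-N\mu\|_{L^1}=O(1)$ directly via a concentration argument for the empirical measure, once more exploiting the log-concavity of $P_{N,V,\b}$.
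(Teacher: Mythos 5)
Your strategy coincides with the paper's: identity \eqref{lemma4} plus concentration of the trigonometric linear statistics $\sum_j\cos(tx_j)$, $\sum_j\sin(tx_j)$ under the log-concave measure $P_{N,V,\b}$ (the paper packages the Bakry--Emery/Herbst fluctuation bound \emph{and} the bias control together in Lemma \ref{Concentration2}, with details in \cite{GoetzeVenker}). Your handling of the exponential moment by expanding $\E_{N,V,\b}\exp\{\l\U\}$ into moments, using generalised H\"older and $(\E_{N,V,\b}\lv\cir{u}_N(t,x)\rv^{2k})^{1/k}\lesssim kt^2/\a_V+1+m(t)^2$, is a legitimate variant of the usual Jensen-under-the-integral argument, and the resulting condition $\a_V>C'(h)\l$ plays exactly the role of $\a^h(\l)$; the lower bound via Jensen and the identity $Z_{N,V,\b,\U}/Z_{N,V,\b}=\E_{N,V,\b}e^{\U}$ are fine.

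The genuine gap is precisely the point you flag, and neither of your remedies for the bias $m(t)=\E_{N,V,\b}\sum_j\cos(tx_j)-N\int\cos(ts)\,d\mu(s)$ works as stated. Rigidity estimates of the kind proved in \cite{Bourgadeetal1} control the particle locations only up to $N^{\e}$ factors; summing them gives $\lv m(t)\rv\lesssim(1+\lv t\rv)N^{\e}$, not $O(1)$, and an $N^{\e}$ bias injects a deterministic term of order $N^{2\e}$ into $\U$, so $\E_{N,V,\b}\exp\{\l\U\}$ could be as large as $e^{cN^{2\e}}$ and the uniformity in $N$ claimed in the Proposition is lost; rigidity is not a substitute for a $1/N$-expansion of the one-point function. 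The fallback is weaker still: $\rho^1_{N,V,\b}$ is deterministic --- it \emph{is} the mean of the empirical measure --- so no concentration argument (log-Sobolev, large deviations at speed $N^2$, etc.) can control $\rho^1_{N,V,\b}-N\mu$; concentration bounds fluctuations about the mean, whereas $m(t)$ is exactly the bias of the mean. What is needed is a genuine rate-of-convergence / loop-equation input of the form $\lv\E_{N,V,\b}\sum_j f(x_j)-N\int f\,d\mu\rv\leq C(\|f\|_\infty+\|f^{(3)}\|_\infty)$ for real analytic strongly convex external fields; this is exactly the result of \cite{Shcherbina} that the paper builds into Lemma \ref{Concentration2} (applicable here because $V=Q+h_\mu$ is real analytic, even and strongly convex, and the polynomial growth in $t$ of the resulting bound is harmless against the Schwartz decay of $\F{h}$). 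With that input in place of your two suggestions, your argument closes and is essentially the paper's proof.
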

The proof is based on identity \eqref{lemma4} and the following  concentration of measure inequality for linear statistics. Details
can be found in \cite{GoetzeVenker}. By $\mu_{Q,\b}$ we will denote the
equilibrium measure to the external field $Q$ and $\b$.

\begin{lemma}\label{Concentration2}
Let $Q$ be a real analytic external field with $Q''\geq c>0$. Then
for any Lipschitz function $f$ whose third derivative is bounded on a neighborhood of $\textup{supp}(\mu_{Q,\b})$, we have for any
$\epsilon>0$
\begin{align*}
\E_{N,Q,\b}\exp\lee{\epsilon\lbb\sum_{j=1}^N f(x_j)-N\int f(t)d\mu_{Q,\b}(t)\rbb}\ree\leq \exp\lee{\frac{\epsilon^2\Lip{f}^2}{2c}}+
\e C (\|f\|_\infty+\|f^{(3)}\|_\infty)\ree,
\end{align*}
where $\Lip{f}$ denotes the Lipschitz constant of $f$ on $\R$ and $\|f\|_\infty$ $(\|f^{(3)}\|_\infty)$ denotes the bound of (the
third derivative of) $f$ on the neighborhood of $\textup{supp}(\mu_{Q,\b})$. Here the constant $C$ does not depend on $N$ or $f$.
\end{lemma}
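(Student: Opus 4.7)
The inequality is a sub-Gaussian concentration bound for the linear statistic $F(x):=\sum_{j=1}^N f(x_j)$ around $N\int f\,d\mu_{Q,\b}$, with an extra $O(\epsilon)$ shift absorbing the bias of $\E_{N,Q,\b}F$. I would split
$$F-N\int f\,d\mu_{Q,\b}=\bigl(F-\E_{N,Q,\b}F\bigr)+\bigl(\E_{N,Q,\b}F-N\int f\,d\mu_{Q,\b}\bigr)$$
and treat the random (first) and deterministic (second) parts separately.

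\textbf{Step 1: log-Sobolev and Herbst.} Writing $P_{N,Q,\b}\propto e^{-\mathcal{H}}$ with $\mathcal{H}(x)=N\sum_j Q(x_j)-\b\sum_{i<j}\log|x_i-x_j|$, I would observe that on the Weyl chamber $x_1<\cdots<x_N$ the Hessian of $\mathcal{H}$ satisfies $\nabla^2\mathcal{H}(x)\geq Nc\cdot I$: the diagonal piece contributes $N\,\mathrm{diag}(Q''(x_j))\geq NcI$ by hypothesis, and the Hessian of the logarithmic repulsion is positive semidefinite. Bakry--Emery on this convex chamber then gives a log-Sobolev inequality for $P_{N,Q,\b}$ with constant $1/(Nc)$, and Herbst's lemma upgrades it to
$$\log\E_{N,Q,\b}\exp\bigl\{\epsilon(F-\E_{N,Q,\b}F)\bigr\}\leq\frac{\epsilon^2\|\nabla F\|_\infty^2}{2Nc}$$
for every Euclidean-Lipschitz $F$. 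Since $\|\nabla F\|_\infty^2\leq N\,\Lip{f}^2$ for $F=\sum_j f(x_j)$, the right-hand side equals $\epsilon^2\Lip{f}^2/(2c)$, matching the Gaussian factor in the claim.

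\textbf{Step 2: bias of the mean.} It remains to show $|\E_{N,Q,\b}F-N\int f\,d\mu_{Q,\b}|\leq C(\|f\|_\infty+\|f^{(3)}\|_\infty)$ with $C$ uniform in $N$ and $f$. This is the standard ``mean correction'' for $\b$-ensemble linear statistics. I would derive it from the first loop / Dyson--Schwinger equation: testing the integration-by-parts identity for $Z_{N,Q,\b}$ against the vector field $(f'(x_1),\dots,f'(x_N))$ produces an exact relation between $\E\int f'Q'\,d\hat\mu_N$, the double Stieltjes-type integral $\E\iint\frac{f'(x)-f'(y)}{x-y}\,d\hat\mu_N(x)d\hat\mu_N(y)$, and a $\tfrac{1}{N}\E\int f''\,d\hat\mu_N$ remainder. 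Subtracting the Euler--Lagrange equation for $\mu_{Q,\b}$ and using the Step~1 concentration to linearize the double integral around $\mu_{Q,\b}\otimes\mu_{Q,\b}$ yields the bias bound; the $\|f^{(3)}\|_\infty$ norm enters through a cubic Taylor expansion used to convert the Stieltjes integrand into local quantities.

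\textbf{Main obstacle.} Step~1 is routine once the Hessian lower bound has been recorded. The genuine difficulty is Step~2: na\"ive concentration arguments only yield $|\E F-N\int f\,d\mu_{Q,\b}|=o(N)$, far weaker than the $N$-uniform $O(1)$ bound needed here. Extracting the precise dependence on $\|f\|_\infty$ and $\|f^{(3)}\|_\infty$ forces a loop-equation analysis, and it is this step---essentially the central limit theorem bias for $\b$-ensemble linear statistics---that carries the main weight of the proof. Adding $\epsilon\cdot C(\|f\|_\infty+\|f^{(3)}\|_\infty)$ to the Step~1 bound then gives the claimed exponential inequality.
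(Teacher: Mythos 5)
Your proposal is correct and follows essentially the same route as the paper: the paper also applies the Bakry--Emery/Herbst logarithmic Sobolev argument (via the strict convexity $Q''\geq c$, citing \cite{AGZ}) to the Lipschitz statistic $\sum_j f(x_j)$, and then controls the bias $\lvert \E_{N,Q,\b}\sum_j f(x_j)-N\int f\,d\mu_{Q,\b}\rvert$ by an $N$-uniform rate-of-convergence estimate, which it simply cites from \cite{Shcherbina} instead of proving. Your Step 2 loop-equation sketch is in effect an outline of that cited result, so the only difference is that the paper outsources the bias bound to the literature while you indicate how it would be derived.
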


The lemma follows from an application of a basic logarithmic Sobolev inequality due to the strict convexity of the external
field $Q$, to the Lipschitz function $\sum_{j=1}^N f(x_j)$ (see e.g. \cite{AGZ}) and a rate of convergence result from
\cite{Shcherbina} which allows to replace the exact mean by its limit as $N\to\infty$.

\begin{proof}[Proof of Proposition \ref{Dirichlet}]
 The ratio $Z_{N,V,\b}/Z_{N,V,\b,\U}$ is bounded by Proposition \ref{Fourier} and therefore negligible. 
We have by H\"older's inequality for $\e>0$
\begin{align*}
 &D_{N,V,\b}(\sqrt{q})\leq
C\frac{1}{2N}\sum_{l=1}^N\E_{N,V,\b}\lbb\partial_{x_l}\exp\{\frac{1}{2}\U(x)\}\rbb^2=C\frac{1}{8N}\sum_{l=1}^N\E_{N,V,\b}\exp\{
\U(x)\} \lbb\partial_ {x_l} \U(x)\rbb^2\\
&\leq C\lbb \E_{N,V,\b}\exp\{(1+\e)
\U(x)\}\rbb^{1/(1+\e)} \frac{1}{8N}\sum_{l=1}^N\lbb\E_{N,V,\b}\lvb\partial_ {x_l} \U(x)\rvb^{2(\e+1)/\e} \rbb^{\e/(\e+1)}.
\end{align*}
Again by Proposition \ref{Fourier}, $\lbb \E_{N,V,\b}\exp\{(1+\e)
\U(x)\}\rbb^{1/(1+\e)}$ is bounded in $N$. In order to bound the second term, recall that
\begin{align*}
\U(x)=- \frac{1}{2\sqrt{2\pi}}\int \lbb\big\lvert\sum_{j=1}^N \cos(tx_j)-N\int \cos(ts)d\mu(s)
\big\rvert^2+\lvert\sum_{j=1}^N
\sin(tx_j) \rvert^2\rbb\F{h}(t)dt.
\end{align*}
In the following we only treat the cosine term, the term involving the sine can be estimated analogously. We have
\begin{align}
 &\lvb\partial_{x_l}\int \big\lvert\sum_{j=1}^N \cos(tx_j)-N\int
\cos(ts)d\mu(s)\big\rvert^2\F{h}(t)dt\rvb^{2(\e+1)/\e}\nonumber\\
&=\lvb 2\int \lbb \sum_{j=1}^N
\cos(tx_j)-N\int \cos(ts)d\mu(s)\rbb t\sin(tx_l)\F{h}(t)dt\rvb^{2(\e+1)/\e}\nonumber\\
&\leq C \int \lvb \sum_{j=1}^N
\cos(tx_j)-N\int \cos(ts)d\mu(s)\rvb^{2(\e+1)/\e} \lv t\rv^{2(\e+1)/\e}\lvb\F{h}(t)\rvb dt\label{beta1}
\end{align}
where the last inequality is derived by first applying the triangle inequality and then using Jensen's inequality.
Lemma \ref{Concentration2} gives that the absolute moments of $\sum f(x_j)-N\int fd\mu$ are bounded by those of a certain Gaussian
distribution with mean of order $\|f\|_\infty+\|f^{(3)}\|_\infty$ and variance of order $\Lip{f}^2$ (times a factor of order
$\Lip{f}$). We thus get
\begin{align*}
 \E_{N,V,\b}\lvb \sum_{j=1}^N
\cos(tx_j)-N\int \cos(ts)d\mu(s)\rvb^{2(\e+1)/\e}\leq p(t)
\end{align*}
for some polynomial $p$. By the strong
decay of $\F{h}$, the expectation of
\eqref{beta1} is bounded in $N$. This gives the claimed bound.
\end{proof}

\begin{proof}[Proof of Theorem \ref{local}]
 From Propositions \ref{erdosyau} and \ref{Dirichlet} we have that the statistics $\frac{1}{\lv J\rv}\sum_{i\in
J}\E_{N,Q,\b}^hG_{i,m}$ and $\frac{1}{\lv J\rv}\sum_{i\in
J}\E_{N,V,\b}G_{i,m}$ coincide in the limit $N\to\infty$, as long as $\lim_{N\to\infty}\frac{N^{\e_1}}{\lv
J\rv}=0$ for some $\e_1>0$. It
is a standard argument (\cite[Section 7]{ErdosSchleinYauYin}) to infer from this that also the correlation measures of $\Ph$
and $P_{N,V,\b}$ coincide in
the sense
of Theorem \ref{local}. To give the idea of this argument, note that by a simple rescaling we have the identity
\begin{align}
 &\int f(t_1,\dots,t_k)\int_{a-s_N}^{a+s_N}\rho_{N,Q,\b}^{h,k}\lbb
u+\frac{t_1}{N\mu(a)},\dots,u+\frac{t_k}{N\mu(a)}\rbb\,\frac{du}{2s_N}dt_1\dots dt_k\nonumber\\
&=(1+o(1))\int_{a-s_N}^{a+s_N}\int\sum_{i_1\not=i_2\not=\dots\not=i_k}
\tilde{f}(N(x_{i_1}-u),N(x_{i_1}-x_{i_2}),\dots,N(x_{i_{k-1}}-x_{i_k}))qd\w\frac{du}{2s_N},\label{gaptofunction}
\end{align}
where we use the notation from Proposition \ref{erdosyau}
and $$\tilde{f}(t_1,\dots,t_k):=f(\mu(a)t_1,\mu(a)(t_2-t_1),\dots,\mu(a)(t_k-t_{k-1})).$$ Symmetrizing and rearranging the
summation,
\eqref{gaptofunction} can be written as
\begin{align}\label{gap2}
(1+o(1))\int_{a-s_N}^{a+s_N}\int\sum_{m\in S_k}\sum_{i=1}^N Y_{i,m}(u,x)qd\w\frac{du}{2s_N},
\end{align}
where $S_k$ denotes the set of $(k-1)$-tuples of increasing positive integers, $m=(m_2,m_3,\dots,m_k)$ and 
\begin{align*}
 Y_{i,m}(u,x):=\tilde{f}(N(x_i-u),N(x_i-x_{m_2}),\dots,N(x_i-x_{m_k})).
\end{align*}
If $i+m_k>N$, then we set $Y_{i,m}:=0$.
Now, one can show that as $N\to\infty$, \eqref{gap2} can be replaced by $\int \sum_{m\in S_k}\frac{1}{N}\sum_{i=1}^N G_{i,m}qd\w$,
where $G(t_2,\dots,t_k):=\int_\R \tilde{f}(u,t_2,\dots,t_k)du$. Then Proposition \ref{erdosyau} can be applied for each fixed $m$.
For details see \cite[Section 7]{ErdosSchleinYauYin}.

It remains to see that the limits of the correlation measures of $P_{N,V,\b}$ are indeed universal and in particular coincide with
the Gaussian ones. This is the universality result
\cite[Corollary 2.2]{Bourgadeetal1} which precisely states that the correlation measures of $P_{N,Q_1,\b}$ and $P_{N,Q_2,\b}$
have the same limit (in the sense of Theorem \ref{local}) for any real analytic and strongly convex $Q_1,Q_2$ with
$\a_{Q_1},\a_{Q_2}>0$.
\end{proof}

\section{Proof of Theorem \ref{global}}
\begin{proof}[Proof of Theorem \ref{global}]
Recall that $\mu$ was determined such that $\mu$ is the equilibrium measure to $V=Q+h_\mu$ and $\b$. It remains to show that
$\mu$ is uniquely determined by this requirement and indeed the limit of the first correlation function.
 We consider a Lipschitz function $\map{f}{\R}{\R}$ with three continuous derivatives and estimate for any $\e>0$
\begin{align*}
 P_{N,Q,\b}^h(\lvert N^{-1}\sum_{j=1}^Nf(x_j)-\int fd\mu\rvert>\e)=(Z_{N,V,\b}/{Z_{N,V,\b,\,
\U}})\E_{N,V,\b}e^{\U(x)}\dopp{1}_{\{\lvert
N^{-1}\sum_{j=1}^Nf(x_j)-\int fd\mu\rvert>\e\}}.
\end{align*}
By H\"older's inequality and Proposition \ref{Fourier}, we have
\begin{align*}
 P_{N,Q,\b}^h(\lvert N^{-1}\sum_{j=1}^Nf(x_j)-\int fd\mu\rvert>\e)\leq C \lbb P_{N,V,\b}(\lvert N^{-1}\sum_{j=1}^Nf(x_j)-\int
fd\mu\rvert>\e)\rbb^c
\end{align*}
for some $c,C>0$. By Lemma \ref{Concentration2}, this last probability converges for any $\e>0$ to $0$ exponentially fast as
$N\to\infty$. We
conclude that $$\lim_{N\to\infty}\E_{N,Q,\b}^h\lvert N^{-1}\sum_{j=1}^Nf(x_j)-\int fd\mu\rvert=0\ \text{ 
and hence }\
\lim_{N\to\infty}\E_{N,Q,\b}^h N^{-1}\sum_{j=1}^Nf(x_j)=\int fd\mu.$$ As convergence for smooth
Lipschitz functions determines weak convergence, the weak convergence of the first correlation measure follows.
 As the limit of weak convergence is unique, this shows uniqueness of $\mu$. It is known that the real-analyticity and convexity
of $V$ ensures the existence and positivity of
the continuous density of $\mu$ (see e.g. \cite{McLaughlinMiller08}).
\end{proof}

\section*{acknowledgement}
 The author would like to thank the reviewers for helpful comments which improved the presentation.

\bibliographystyle{alpha}
\bibliography{bibliography}

\begin{thebibliography}{BdMPS95}

\bibitem[AGZ10]{AGZ}
G.W. Anderson, A.~Guionnet, and O.~Zeitouni.
\newblock {\em An introduction to random matrices}, volume 118 of {\em
  Cambridge Studies in Advanced Mathematics}.
\newblock Cambridge University Press, Cambridge, 2010.

\bibitem[BdMPS95]{BPS}
A.~Boutet~de Monvel, L.~Pastur, and M.~Shcherbina.
\newblock On the statistical mechanics approach in the random matrix theory:
  integrated density of states.
\newblock {\em J. Statist. Phys.}, 79(3-4):585--611, 1995.

\bibitem[BEY11]{Bourgadeetal1}
P.~Bourgade, L.~Erd{\"o}s, and H.-T. Yau.
\newblock Universality of general $\beta$-ensembles.
\newblock 2011.
\newblock \url{http://arxiv.org/abs/1104.2272}.

\bibitem[BEY12]{Bourgadeetal}
Paul Bourgade, L{\'a}szl{\'o} Erd{\H{o}}s, and Horng-Tzer Yau.
\newblock Bulk universality of general {$\beta$}-ensembles with non-convex
  potential.
\newblock {\em J. Math. Phys.}, 53(9):095221, 19, 2012.

\bibitem[CGZ13]{Chafaietal}
D.~Chafa\"i, N.~Gozlan, and P.-A. Zitt.
\newblock First order global asymptotics for {C}alogero-{S}utherland gases.
\newblock 2013.
\newblock \url{http://arxiv.org/abs/1304.7569}.

\bibitem[EKYY12]{erdosgraph}
L.~Erd{\H{o}}s, A.~Knowles, H.-T. Yau, and J.~Yin.
\newblock Spectral {S}tatistics of {E}rd{\H o}s-{R}\'enyi {G}raphs {II}:
  {E}igenvalue {S}pacing and the {E}xtreme {E}igenvalues.
\newblock {\em Comm. Math. Phys.}, 314(3):587--640, 2012.

\bibitem[ESY11]{ErdosSchleinYau}
L.~Erd{\H{o}}s, B.~Schlein, and H.-T. Yau.
\newblock Universality of random matrices and local relaxation flow.
\newblock {\em Invent. Math.}, 185(1):75--119, 2011.

\bibitem[ESYY12]{ErdosSchleinYauYin}
L.~Erd{\H{o}}s, B.~Schlein, H.-T. Yau, and J.~Yin.
\newblock The local relaxation flow approach to universality of the local
  statistics for random matrices.
\newblock {\em Ann. Inst. Henri Poincar\'e Probab. Stat.}, 48(1):1--46, 2012.

\bibitem[EY12]{ErdosYau12}
L.~Erd{\H{o}}s and H.-T. Yau.
\newblock Universality of local spectral statistics of random matrices.
\newblock {\em Bull. Amer. Math. Soc. (N.S.)}, 49(3):377--414, 2012.

\bibitem[GV13]{GoetzeVenker}
F.~G{\"o}tze and M.~Venker.
\newblock Local universality of repulsive particle systems and random matrices.
\newblock 2013.
\newblock \url{http://arxiv.org/abs/1205.0671}, to appear in \textit{Ann.
  Probab.}

\bibitem[LL08]{LevinLubinsky08}
E.~Levin and D.S. Lubinsky.
\newblock Universality limits in the bulk for varying measures.
\newblock {\em Adv. Math.}, 219(3):743--779, 2008.

\bibitem[MM08]{McLaughlinMiller08}
K.~T.-R. McLaughlin and P.~D. Miller.
\newblock The {$\overline{\partial}$} steepest descent method for orthogonal
  polynomials on the real line with varying weights.
\newblock {\em Int. Math. Res. Not. IMRN}, pages Art. ID rnn 075, 66, 2008.

\bibitem[Shc11]{Shcherbina}
M.~Shcherbina.
\newblock Orthogonal and symplectic matrix models: universality and other
  properties.
\newblock {\em Comm. Math. Phys.}, 307(3):761--790, 2011.

\bibitem[VV09]{ValkoVirag}
B.~Valk{\'o} and B.~Vir{\'a}g.
\newblock Continuum limits of random matrices and the {B}rownian carousel.
\newblock {\em Invent. Math.}, 177(3):463--508, 2009.

\end{thebibliography}

\end{document}